\def\CC{\mathbb{C}}
\def\RR{\mathbb{R}}
\def\NN{\mathbb{N}}
\def\aaa{\mathbb{A}}
\def\wi{\widetilde}
\def\pa{\partial}
\def\ov{\overline}
\def\su{\subset}
\def\lon{\longrightarrow}
\def\cC{{\mathcal C}}
\def\HH{{\mathcal H}}
\def\eps{\varepsilon}
\DeclareMathOperator{\re}{Re}
\DeclareMathOperator{\RE}{Re}
\DeclareMathOperator{\im}{Im}
\renewcommand{\phi}{\varphi}
\newtheorem{prop}{Proposition}
\newtheorem{tw}[prop]{Theorem}
\newtheoremstyle{rem}{}{}{}{}{\bf}{.}{ }{}
\theoremstyle{rem}
\newtheoremstyle{remm}{}{}{}{}{\bf}{.}{ }{}
\theoremstyle{remm}
\begin{document}


\title{Geometric properties of semitube domains}

\address{Institute of Mathematics, Faculty of Mathematics and Computer Science, Jagiellonian University, \L ojasiewicza 6, 30-348 Krak\'ow, Poland}
\thanks{The work is partially supported by 
grants of the Polish National Science Centre no. UMO-2011/03/B/ST1/04758 and DEC-2012/05/N/ST1/03067}

\email{{Lukasz.Kosinski}@im.uj.edu.pl}
\email{{Tomasz.Warszawski}@im.uj.edu.pl}
\email{{Wlodzimierz.Zwonek}@im.uj.edu.pl}
\author{\L ukasz Kosi\'nski}
\author{Tomasz Warszawski}
\author{W\l odzimierz Zwonek}


\subjclass[2010]{Primary: 32V20. Secondary: 32L05, 32V25}


\keywords{Semitube domains, Hartogs-Laurent domains, Bochner's theorem, multisubharmonic functions.}

\begin{abstract}
In the paper we study the geometry of semitube domains in $\mathbb C^2$. In particular, we extend the result of Burgu\'es and Dwilewicz for semitube domains dropping out the smoothness assumption. We also prove various properties of non-smooth pseudoconvex semitube domains obtaining among others a relation between pseudoconvexity of a semitube domain and the number of components of its vertical slices.

Finally, we present an example showing that there is a non-convex domain in $\mathbb C^n$ such that its image under arbitrary isometry is pseudoconvex.
\end{abstract}

\maketitle

\section{Introduction}
A theorem of Bochner states that a tube domain in $\CC^n$ is pseudoconvex if and only if it is convex. This fact may be seen as a starting point for our considerations.

In \cite{Bur-Dwi} a similar problem was considered for semitube domains --- domains that are invariant in one real direction (they were considered in $\CC^2$). Formally the {\it semitube domain $($set$)$ with the base $B$} being a domain (set) lying in $\mathbb R^3$ is defined as follows $$
S_B:=\{z\in\CC^2:(z_1,\RE z_2)\in B\},$$ which may be rewritten as $B\times \mathbb R$. The first observation that should be made is that there is no a direct analogue of Bochner theorem in the class of semitube domains ---  it follows easily from the fact that any domain $D\subset\CC$ induces a pseudoconvex domain of the form $S_{D\times(0,1)}$. However, it was recently proven by Burgu\'es and Dwilewicz that some additional requirement implies the convexity of a semitube domain. Namely, main result of \cite{Bur-Dwi} is that under additional assumption of smoothness any domain $D\subset\RR^3$ such that for any isometry $A$ of $\RR^3$ the semitube domain $S_{A(D)}=A(D)\times\RR$ is pseudoconvex must be convex. The main aim of our paper is to show this result without the smoothness assumption. The methods used in the paper are also quite different.

\begin{tw}\label{thm:1}
Let $D\su\RR^3$ be a domain such that the semitube $S_{A(D)}$ is pseudoconvex for any isometry $A$ of $\RR^3$. Then $D$ is convex.
\end{tw}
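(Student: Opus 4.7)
My strategy is to extract from the rotation-invariant pseudoconvexity hypothesis the fact that the function $\psi := -\log\dist(\cdot,\pa D)$ on $D$ has positive semidefinite Hessian everywhere (in the appropriate weak sense), and then to argue that this property alone forces $D$ to be convex.

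First, since the semitube $S_B$ is invariant under the translations $z_2\mapsto z_2+it$, the function $\Phi:=-\log\dist(\cdot,\pa S_B)$ on $S_B\su\CC^2$ depends only on $(z_1,\RE z_2)$ and coincides on the base $B$ with $\psi_B:=-\log\dist(\cdot,\pa B)$. A direct computation of the complex Hessian $\Phi_{z_j\bar z_k}$, using that $\Phi$ is independent of $\IM z_2$, shows that plurisubharmonicity of $\Phi$ is equivalent to three real conditions on $\psi_B$ viewed as a function on $B\su\RR^3$: $(\psi_B)_{x_2x_2}\ge0$; the subharmonicity $(\psi_B)_{x_1x_1}+(\psi_B)_{y_1y_1}\ge0$; and a Cauchy--Schwarz-type mixed inequality. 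Applied to $B=A(D)$ for a varying isometry $A$ of $\RR^3$ and pulled back via $A$ to $D$, the first condition becomes $\psi_{vv}\ge0$ along the direction $v=A^{-1}(\pa/\pa x_2)$. As $A$ ranges over $O(3)$, the direction $v$ ranges over every unit vector of $\RR^3$, so the Hessian of $\psi$ is positive semidefinite on $D$. The remaining two conditions then follow automatically from the PSD property by Cauchy--Schwarz applied to the Hessian matrix, so no additional information is extracted.

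Next, I claim that PSD Hessian of $\psi=-\log\dist(\cdot,\pa D)$ forces $D$ to be convex, and I verify this as a local statement at every $p\in\pa D$. In the smooth case, with inward normal $n$ and principal curvature $\kappa_\tau$ in a tangent direction $\tau$ at $p$, the standard Hessian formulas for the distance function give $\psi_{\tau\tau}(p+tn)\sim\kappa_\tau/t$ as $t\to0^+$, so PSD forces $\kappa_\tau\ge0$. At a non-smooth point $p$ where $D$ is not locally convex, there is a cone of interior directions in which $\dist(x,\pa D)\sim|x-p|$ for $x\to p$; but the Hessian of $-\log|x-p|$ in $\RR^3$ has strictly negative eigenvalues in the directions tangential to the spheres centred at $p$, again contradicting PSD of the Hessian of $\psi$. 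Thus $D$ is locally convex at every boundary point, and a Tietze-type theorem for connected open subsets of $\RR^n$ yields that $D$ is convex.

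The principal technical difficulty is the lack of regularity: $\pa D$ need not even be $C^0$, and $\psi$ is only Lipschitz, so the pointwise arguments above must be carried out in a distributional or viscosity sense. A convenient regularization is the inner parallel $D_\eps:=\{x\in D:\dist(x,\pa D)>\eps\}$. Since $\dist(\cdot,\pa S_B)$ depends only on $(z_1,\RE z_2)$, the semitube construction commutes with taking inner parallels, so $S_{A(D_\eps)}$ coincides with the inner $\eps$-parallel of the pseudoconvex $S_{A(D)}$ and is therefore itself pseudoconvex for every $A$. Thus each $D_\eps$ satisfies the same hypothesis as $D$, and $D=\bigcup_{\eps>0}D_\eps$ is a nested union; proving convexity for each $D_\eps$ -- either by a further smoothing that allows one to invoke the result of Burgu\'es and Dwilewicz, or by making the Hessian argument rigorous directly in the Lipschitz setting -- is the delicate point of the proof.
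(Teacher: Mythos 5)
Your first step is sound and genuinely different from the paper's route. From pseudoconvexity of $S_{A(D)}$ you use only the diagonal entry $\Phi_{z_2\bar z_2}\ge 0$ of the complex Hessian of $\Phi=-\log\dist(\cdot,\pa S_{A(D)})$; since $\Phi$ is continuous and independent of $\IM z_2$, this distributional inequality gives honest convexity of $-\log\dist(\cdot,\pa A(D))$ along every segment in the $\RE z_2$-direction, hence, pulling back and letting $A$ vary, convexity of $\psi=-\log\dist(\cdot,\pa D)$ along every segment contained in $D$ --- with no smoothness needed. (The paper instead passes to the Hartogs--Laurent domain $\pi(S_{A(D)})$ and applies the Kontinuit\"atssatz to a family of closed annuli; your reduction is arguably more elementary.)

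The genuine gap is in the second half, the implication ``$\psi$ locally convex $\Rightarrow$ $D$ convex''. Your argument for it is a boundary-curvature computation ($\psi_{\tau\tau}(p+tn)\sim\kappa_\tau/t$) that presupposes a $\cC^2$ boundary, i.e.\ exactly the hypothesis the theorem is meant to remove; your non-smooth alternative (a cone of directions where $\dist(x,\pa D)\sim|x-p|$) is neither justified nor exhaustive --- a boundary can fail local convexity at a point near which it is a smooth concave graph, so no such cone exists --- and in any case an asymptotic equivalence of functions does not transfer to their Hessians. The proposed repair via inner parallel bodies $D_\eps$ does not close the gap: $\pa D_\eps$ is a level set of the Lipschitz function $\dist(\cdot,\pa D)$ and is in general no smoother than $\pa D$ (it can have corners), so neither the Burgu\'es--Dwilewicz theorem nor your curvature computation applies to $D_\eps$, and the ``Lipschitz version of the Hessian argument'' is precisely what you leave undone. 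The fix is to use what you already proved --- convexity of $\psi$ on segments --- directly: as in the paper, H\"ormander's Theorem 2.1.27 produces a closed rectangle $R$ with $a\in\pa R\cap\pa D$ not a vertex and $R\setminus\{a\}\su D$; on the segments of $R$ parallel to the side through $a$, the convex function $\psi$ is bounded by its values at the endpoints, which remain bounded as the segments approach that side, while $\psi\to+\infty$ at the interior points approaching $a$. This contradiction requires no regularity of $\pa D$ and would complete your proof.
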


Another natural question that arises while considering semitube domains is the problem whether one could exhaust any pseudoconvex semitube domain with smooth semitube domains. This is the case as it is shown in the following theorem.

\begin{tw}\label{thm:3}
Any pseudoconvex semitube domain $G\su\CC^2$ can be exhausted by $\cC^\infty$-smooth strongly pseudoconvex semitube domains.
\end{tw}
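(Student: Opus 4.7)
The plan is to produce a smooth strongly plurisubharmonic exhaustion function of $G$ that respects the semitube structure, and then take connected components of its regular sublevel sets. Write $G=S_B$ with $B\su\CC\times\RR$ a domain. For any $z=(z_1,z_2)\in G$, a point of $\pa G=S_{\pa B}$ nearest to $z$ must share the $\IM z_2$-coordinate, so
\[
\dist((z_1,z_2),\pa G)=\dist((z_1,\RE z_2),\pa B).
\]
Since $-\log\dist(\cdot,\pa G)$ is plurisubharmonic on the pseudoconvex domain $G$, the function
\[
\rho_0(z_1,z_2):=-\log\dist(z,\pa G)+|z_1|^2+(\RE z_2)^2=:\phi_0(z_1,\RE z_2)
\]
is a continuous plurisubharmonic exhaustion of $G$ of semitube form, whose Levi form (in the sense of currents) is bounded below by $\diag(1,\tfrac12)$.

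To smooth $\phi_0$, I would convolve it in the three base variables. Fix a nonnegative mollifier $\psi_\eps\in\mc C^\infty_c(\CC\times\RR)$ of mass one and support in a ball of radius $\eps$, set $\phi_\eps:=\phi_0*\psi_\eps$ on $B_\eps:=\{x\in B:\dist(x,\pa B)>\eps\}$, and let $\rho_\eps(z_1,z_2):=\phi_\eps(z_1,\RE z_2)$ on $S_{B_\eps}$. The identity
\[
\rho_\eps(z)=\int_{\CC\times\RR}\rho_0\bigl(z-(w_1,s)\bigr)\,\psi_\eps(w_1,s)\,dw_1\,ds,
\]
with $(w_1,s)$ viewed as a point of $\CC\times\RR\su\CC^2$, exhibits $\rho_\eps$ as a positive average of plurisubharmonic translates of $\rho_0$, so $\rho_\eps$ is plurisubharmonic; it is $\mc C^\infty$ because $\phi_\eps$ is; and because the convolution leaves the quadratic summand unchanged up to a constant, its Levi form still majorises $\diag(1,\tfrac12)$, so $\rho_\eps$ is in fact strongly plurisubharmonic. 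Moreover $\rho_\eps\to\rho_0$ locally uniformly on $G$ as $\eps\to 0$.

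Next, fix an exhaustion $K_1\Subset K_2\Subset\cdots$ of $G$ by compact connected sets (available from connectedness and local connectedness of $G$). Choose $c_\nu$ with $\rho_0<c_\nu$ on $K_\nu$ and $\{\rho_0\le c_\nu+1\}\Subset K_{\nu+1}$, then pick $\eps_\nu>0$ small enough that $|\rho_{\eps_\nu}-\rho_0|<\tfrac12$ on $\{\rho_0\le c_\nu+1\}$. By Sard's theorem a regular value $c'_\nu\in(c_\nu+\tfrac12,c_\nu+1)$ of $\rho_{\eps_\nu}$ exists; the set $\{\rho_{\eps_\nu}<c'_\nu\}$ is then $\IM z_2$-invariant, relatively compact in $K_{\nu+1}$, has $\mc C^\infty$-smooth boundary, and is strongly pseudoconvex because the defining function has positive Levi form. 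Let $G_\nu$ be the connected component containing $K_\nu$. Because the one-parameter group $t\mapsto((z_1,z_2)\mapsto(z_1,z_2+it))$ is connected, it preserves every component of any $\IM z_2$-invariant open set, so $G_\nu$ is again a semitube domain, and $\{G_\nu\}$ is the desired exhausting sequence.

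The delicate point is the plurisubharmonicity of $\rho_\eps$. A direct four-dimensional convolution of $\rho_0$ against a mollifier on $\CC^2$ would diverge, because $\rho_0$ is independent of $\IM z_2$; convolving only in the three base variables avoids this, but relies on the observation that the translates $z\mapsto\rho_0\bigl(z-(w_1,s)\bigr)$ for $(w_1,s)\in\CC\times\RR\su\CC^2$ are genuinely plurisubharmonic functions on open subsets of $\CC^2$, so their positive average is too. Once this is granted, the strong positivity from the quadratic term, Sard's theorem applied to $\rho_{\eps_\nu}$, and the invariance of connected components under a connected translation group are standard.
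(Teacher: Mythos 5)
Your overall strategy is the same as the paper's: regularize $-\log d_G$ in a way that preserves independence of $\im z_2$, add a strictly plurisubharmonic quadratic term in $(z_1,\re z_2)$, use Sard's theorem to select regular sublevel sets, and pass to connected components (which stay semitubes because the orbits of $t\mapsto z+(0,it)$ are connected). Convolving in the three base variables is a legitimate variant of the paper's standard four-variable regularization; note, though, that your stated reason for avoiding the latter is incorrect: the convolution of $\rho_0$ with a compactly supported mollifier on $\CC^2$ is an integral of a continuous function over a ball, so it does not diverge, and the result is again independent of $\im z_2$ --- this is precisely what the paper does.

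The genuine defect is in the compactness bookkeeping. Every sublevel set $\{\rho_0\le c\}$ is invariant under $z\mapsto z+(0,it)$, hence unbounded, so $\rho_0$ is \emph{not} an exhaustion function of $G$, and the requirement $\{\rho_0\le c_\nu+1\}\Subset K_{\nu+1}$ with $K_{\nu+1}$ compact can never be met; as written, the inductive choice of $c_\nu$ and $\eps_\nu$ cannot be carried out. What is true, and what you should use, is that $\phi_0(x)=-\log\dist(x,\pa B)+\|x\|^2$ is a continuous exhaustion of the base $B\su\RR^3$, so the whole selection of constants must be done downstairs: take a compact exhaustion $L_\nu$ of $B$ with $\{\phi_0\le c_\nu+1\}\Subset L_{\nu+1}$, demand $|\phi_{\eps_\nu}-\phi_0|<1/4$ on $L_{\nu+1}$, and choose the regular value $c'_\nu\in(c_\nu+1/4,\,c_\nu+1/2)$, so that $\phi_{\eps_\nu}>c_\nu+3/4>c'_\nu$ on $\{\phi_0=c_\nu+1\}$ and the retained component of $\{\phi_{\eps_\nu}<c'_\nu\}$ cannot escape $\{\phi_0<c_\nu+1\}$. (With your constants, $c'_\nu\in(c_\nu+1/2,c_\nu+1)$ against an error of $1/2$, this separation can fail even after the compactness issue is repaired.) Finally, you assert but never verify that $G_\nu\su G_{\nu+1}$; nestedness is part of what ``exhausted'' means, and while it does follow from the corrected choice of constants (take $c_{\nu+1}\ge c_\nu+2$, say, and observe that $G_\nu$ and $G_{\nu+1}$ share a point), it needs to be said. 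The paper sidesteps this by using the monotonicity $u_\eps\searrow u$ of the standard regularization together with a monotone choice of the levels $1/\delta_{1/n}$.
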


Consider the following mapping $\pi:\CC^2\owns z\longmapsto(z_1,\exp(z_2))\in\CC^2$. Note that this mapping induces a holomorphic covering between semitube domains $S_D$ and Hartogs-Laurent domains $\pi(S_D)$. We call a domain $G\subset\mathbb C^2$ a {\it Hartogs-Laurent domain} if any non-empty fiber $\{z_2\in\CC:(z_1,z_2)\in G\}$ is some union of annuli, i.e. sets of the form $\{z_2\in\CC:r<|z_2|<R\}$ with $0\leq r<R\leq\infty$. The projection of $G$ on the first coordinate is called the {\it base} of the domain. The mapping $\pi$ induces a one-to-one correspondence between those two classes of domains as it is stated in the following proposition.

\begin{prop}\label{prop:1}
Let $\pi$ be as above. Then the function
$$S_D\longmapsto\pi(S_D)$$
gives a one-to-one correspondence between the class of all pseudoconvex semitube domains in $\mathbb C^2$ and the class of all pseudoconvex Hartogs-Laurent domains in $\mathbb C^2$.
\end{prop}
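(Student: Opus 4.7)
The plan is to verify the three parts of a bijection in turn: well-definedness, injectivity, and surjectivity. The underlying mechanism is that $\pi$ restricts to an unramified holomorphic covering of $\CC\times\CC^*$. It is therefore a local biholomorphism everywhere, its images of open sets are open, and pseudoconvexity transfers through $\pi$ in both directions by the standard local argument at boundary points.

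For well-definedness, starting from a pseudoconvex semitube $S_D$, I would observe that $\pi(S_D)$ is open and connected (continuous image of a connected set). The essential point is that $S_D$ is invariant under $z_2\mapsto z_2+is$ for every $s\in\RR$, so $\pi(S_D)$ is invariant under the rotations $w_2\mapsto e^{is}w_2$; together with openness this forces every non-empty fiber of $\pi(S_D)$ over $z_1$ to be a union of annuli, and hence $\pi(S_D)$ is a Hartogs--Laurent domain. Pseudoconvexity of $\pi(S_D)$ then follows from that of $S_D$ via the covering. Injectivity is immediate from the equivalence $(z_1,t)\in D\Longleftrightarrow(z_1,e^t)\in\pi(S_D)$, which shows that $D$ is fully determined by $\pi(S_D)$.

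Surjectivity is the main content, and where I expect the real obstacle to lie. Given a pseudoconvex Hartogs--Laurent domain $G$, I would define $\wi D:=\{(z_1,t)\in\CC\times\RR:(z_1,e^t)\in G\}$. Because the fibers of $G$ are rotation-invariant, a direct check gives $S_{\wi D}=\pi^{-1}(G)$, whence $\pi(S_{\wi D})=G$, and pseudoconvexity of $G$ transfers back to $S_{\wi D}$ along the covering. What remains is to verify that $\wi D$ is connected, so that $S_{\wi D}$ is genuinely a domain. My plan here is a contradiction argument: a decomposition $\wi D=D_1\sqcup D_2$ into non-empty disjoint open sets would yield $S_{\wi D}=S_{D_1}\sqcup S_{D_2}$ and thus a cover $G=\pi(S_{D_1})\cup\pi(S_{D_2})$, with the two pieces disjoint because any common point $(z_1,w)$ would force $(z_1,\log|w|)$ to lie in both $D_1$ and $D_2$; this contradicts the connectedness of $G$ and concludes the argument.
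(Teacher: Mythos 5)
Your overall architecture differs from the paper's in a useful way: you actually verify that $S_D\mapsto\pi(S_D)$ is a bijection (well-definedness, injectivity via $(z_1,t)\in D\Leftrightarrow(z_1,e^t)\in\pi(S_D)$, surjectivity via $\wi D$ together with the connectedness argument), whereas the paper takes all of this set-theoretic bookkeeping for granted and only addresses the transfer of pseudoconvexity. For that transfer the paper argues globally: it pushes $-\log d_{S_D}$, which is independent of $\im z_2$, down through $\pi$ to a plurisubharmonic $v$ on $\pi(S_D)$ and takes $\max\{v,\|\cdot\|^2\}$ as a candidate exhaustion; you instead invoke local pseudoconvexity at boundary points transported along the covering. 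Your backward direction ($G$ pseudoconvex $\Rightarrow$ $\pi^{-1}(G)$ pseudoconvex) is fine this way, since every finite boundary point of $\pi^{-1}(G)$ maps to a boundary point of $G$ near which $\pi$ is a local biholomorphism.

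The gap is in the forward direction. The map $\pi$ covers only $\CC\times\CC^*$, so boundary points of $\pi(S_D)$ lying on the hypersurface $\{w_2=0\}$ have empty preimage and no neighborhood that $\pi$ maps onto from inside $\CC^2$; the ``standard local argument at boundary points'' says nothing there. Such points genuinely occur: for $D=\DD\times(-\infty,0)$ the semitube $S_D$ is pseudoconvex and $\pi(S_D)=\DD\times(\DD\setminus\{0\})$, whose boundary contains $\DD\times\{0\}$. You need an extra global ingredient to handle them, e.g.\ the function $-\log|w_2|$, which is plurisubharmonic on $\pi(S_D)\su\{w_2\neq0\}$ and tends to $+\infty$ along $\{w_2=0\}$; taking the maximum of it, the pushed-down $-\log d_{S_D}$, and $\|(z_1,w_2)\|^2$ produces a genuine plurisubharmonic exhaustion of $\pi(S_D)$. (This is in effect what the paper's global construction is for, and the same hypersurface is the delicate point there as well: the paper's $\max\{v,\|\cdot\|^2\}$ likewise needs the $-\log|w_2|$ term to be an exhaustion when $\ov{\pi(S_D)}$ meets $\{w_2=0\}$.) With that supplement your argument closes.
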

 \begin{proof}
Let the domain $S_{D}$ be pseudoconvex. Then $u:=-\log d_{S_D}\in PSH(S_D)$, where $d_G$ is the distance to the boundary of $G$. Since $u$ does not depend on $\im z_2$, the function $v$ given by the formula $v(z):=u(z_1,\log z_2)$, $z\in  \pi(S_D)$, is well-defined and plurisubharmonic on $\pi(S_D)$. Therefore, $$\wi v(z):=\max\{v(z),\|z\|,-\log|z_2|\},\quad z\in\pi(S_D),$$ is an exhaustion plurisubharmonic function for $\pi(S_D)$. The other implication is trivial.
\end{proof}

The above observation shows that there is a very natural relation between (pseudoconvex) semitube domains and (pseudoconvex) Hartogs-Laurent domains. There is a very rich literature on that class of domains (see e.g. \cite{Jar-Pfl}) which shows that many properties of pseudoconvex semitube domains may be concluded from the properties of pseudoconvex Hartogs-Laurent domains. In particular, very irregular Hartogs-Laurent domains, like the worm domains (see \cite{Die-For}) let us produce very irregular semitube pseudoconvex domains.

\section{Proofs of Theorem~\ref{thm:1} and Theorem~\ref{thm:3}}
We start with the proof of the main result of the paper.

\begin{proof}[Proof of Theorem~\ref{thm:1}]
Suppose that $D$ is not convex. The idea of the proof is the following. We find a sequence of  parallel segments of the constant length lying in the domain $D$ and such that the limit segment $I$ intersects the boundary at some inner point whereas the boundary of the limit segment lies in the domain. Then we rotate the domain $D$ so that $I$ were parallel to the $\re z_2$ axis. The image of the rotated semitube domain under $\pi$ is a pseudoconvex Hartogs-Laurent domain with a sequence of annuli lying in the domain. The pseudoconvexity of the Hartogs-Laurent domain lets us get a contradiction with the Kontinuit\"atssatz.

Let us proceed now formally. From \cite[Theorem 2.1.27]{Hor} there is a point $a\in\pa D$ and a quadratic polynomial $P$ on $\RR^3$ such that 
\begin{itemize}
\item $P(a)=0;$
\item $v:=\nabla P(a)\neq 0;$
\item $\langle v,X\rangle=0$ and $C:=-\HH P(a;X)>0$ for some $X\in\RR^3;$
\item $P(x)<0$ implies $x\in D$ for $x\in\RR^3$ near $a$.
\end{itemize}
By $\nabla$ and $\HH$ we denoted the gradient and the Hessian. One may assume that $\|v\|=1$.

For $\eps\geq 0$ and $\delta\in\RR$ such that $(\eps,\delta)\neq (0,0)$, $\eps\HH P(a;v)\leq 1$ and $4|\delta v^T\HH P(a)X|\leq 1$, we have \begin{eqnarray*}P(a-\eps v+\delta X)&=&P(a)+\langle\nabla P(a),-\eps v+\delta X\rangle+\frac12\HH P(a;-\eps v+\delta X)\\&=&-\eps+\frac12\HH P(a;-\eps v)+\frac12\HH P(a;\delta X)-\eps\delta v^T\HH P(a)X\\&\leq&-\eps+\frac12\eps^2\HH P(a;v)-\frac12C\delta^2+\frac14\eps \\&\leq&-\frac12\eps-\frac12C\delta^2+\frac14\eps<0.\end{eqnarray*} It means that $a-\eps v+\delta X\in D$ if this point is sufficiently close to $a$ (i.e. if $(\eps,\delta)$ is sufficiently close to $(0,0)$ but not equal to $(0,0)$ and $\eps\geq 0$). In particular, there exists a closed non-degenerate rectangle $R\su\RR^3$ such that $a\in\pa R\cap\pa D$, $a$ is not a vertex of $R$ and $R\setminus\{a\}\su D$. 

There is an isometry $A$ such that $A(R)=[\alpha,\beta]\times\{0\}\times[\alpha',\beta']\subset\RR^3$ and $A(a)\in\{\alpha,\beta\}\times\{0\}\times(\alpha',\beta')$ for some real numbers $\alpha<\beta$ and $\alpha'<\beta'$ (without loss of generality assume that $A(a)\in\{(\beta,0)\}\times(\alpha',\beta')$). Recall that $S_{A(D)}$ is pseudoconvex. Recall also that the Hartogs-Laurent domain $\Omega:=\pi(S_{A(D)})\subset\CC^2$ is pseudoconvex and because of the form of $A(D)$ we 
get a family of holomorphic mappings
$$
f_b(\lambda):=(b,\lambda),\quad\lambda\in\ov\aaa(e^{\alpha'},e^{\beta'}),\;b\in[\alpha,\beta], \text{ where }\aaa(p,q):=\{\lambda\in\CC:p<|\lambda|<q\},
$$
such that 
$$
\bigcup_{b\in[\alpha,\beta)}f_b(\ov\aaa(e^{\alpha^{\prime}},e^{\beta^{\prime}}))\subset\Omega,
$$

$$
\bigcup_{b\in[\alpha,\beta]}f_b(\pa\aaa(e^{\alpha'},e^{\beta'}))\su\su\Omega.
$$ 
However, $f_{\beta}(\ov\aaa(e^{\alpha^{\prime}},e^{\beta^{\prime}}))\not\subset\Omega$, which contradicts the Kontinuit\"atssatz in the form formulated in \cite[Theorem 4.1.19]{JJ}.
\end{proof}

Now we go on to the proof of Theorem~\ref{thm:3}.

\begin{proof}[Proof of Theorem~\ref{thm:3}]
Let $u:=-\log d_G\in PSH(G)$ and $G_\eps:=\{z\in G:d_G(z)>\eps\}$ for $\eps\in(0,1)$. 
Define the standard regularizations $u_\eps$ of $u$ with the help of convolution with radial functions. We have $u_\eps\in PSH\cap\cC^\infty(G_\eps)$ and $u_\eps\searrow u$ if $\eps\searrow 0$. Moreover, $u_\eps$ does not depend on $\im z_2$.

For $\eps\in(0,1)$ and $\delta>0$ define $$\wi u_\eps(z):=u_\eps(z)+\eps\|(z_1,\re z_2)\|^2,\quad\wi G_{\eps,\delta}:=\{z\in G_\eps:\wi u_\eps(z)<1/\delta\}.$$ Note that $\ov{\wi G_{\eps,\delta}}\su G_\eps$ for $\delta>-1/\log\eps$. Indeed, if $z_n\in\wi G_{\eps,\delta}$, $z_n\to z$, then $u(z_n)\leq\wi u_\eps(z_n)<1/\delta<-\log\eps$, so $u(z)<-\log\eps$.

By the Sard Theorem for any $\eps>0$ the set $A_\eps$ of $\delta>0$ such that $\nabla\wi u_\eps(z)\neq 0$ if $\wi u_\eps(z)=1/\delta$ is dense in $\RR_+$. For $n\in\NN$ we choose a number $\delta_{1/n}$ such that 
\begin{itemize}
\item $\delta_{1/n}>-1/\log(1/n);$
\item $\delta_{1/n}\in A_{1/n}$.
\end{itemize} 
Since the minorants $-1/\log(1/n)$ tend to zero, one may assume additionally that $\delta_{1/n}\searrow 0$ as $n\nearrow\infty$. Then we define $$\wi G_{1/n}:=\wi G_{1/n,\delta_{1/n}}.$$

From the following properties 
\begin{itemize}
\item $\wi u_{1/n}-1/\delta_{1/n}$ are $\cC^\infty$-smooth strongly plurisubharmonic defining functions of $\wi G_{1/n};$
\item $\wi u_\eps$ are independent on $\im z_2$
\end{itemize}
it follows that $\wi G_{1/n}$ are $\cC^\infty$-smooth strongly pseudoconvex semitube open sets. We directly check that $\wi G_{1/n}\su\wi G_{1/m}\su G$ if $n<m$ and any $z\in G$ belongs to some $\wi G_{1/n}$.

Finally, we fix $z\in G$ and define $G_n$ as the component of $\wi G_{1/n}$ containing $z$. Then $G_n\su G_{n+1}\su G$ and $\bigcup_n G_n=G$ (indeed, let $x\in G$, take a curve $\gamma\su G$ joining $x$ and $z$, then $\gamma\su\wi G_{1/n_1}\cup\ldots\cup\wi G_{1/n_m}=\wi G_{1/\max n_k}$ and $x\in\gamma\su G_{\max n_k}$). 
\end{proof}

Remark that if $G=S_D$, where $D\subset\mathbb R^3$, then it follows from the construction of the objects in the proof of the above result that $S_{A(\mathcal P(G_n))}$, where $\mathcal P:\RR^4\lon\RR^3$ is the projection, are strongly pseudoconvex domains exhausting the domain $S_{A(D)}$ for any isometry of $\mathbb R^3$. Thus Theorem~\ref{thm:1} follows from the same result for the strongly pseudoconvex case as it is done in \cite{Bur-Dwi}. However, it seems to us that the proof of Theorem~\ref{thm:1} presented by us is simpler and more self-contained.

\section{More problems related to semitube domains}

Note that the reasoning used in the proof of Theorem~\ref{thm:1} also implies the following property of pseudoconvex Hartogs-Laurent and semitube domains.

\begin{prop}\label{prop:semicontinuity} Let $G\subset\CC^2$ be a pseudoconvex Hartogs-Laurent domain with the base $\Omega\subset\CC$. Consider the function
$$t:\Omega\owns z\longmapsto\text{number of components of $G_z$},$$
where $G_z:=G\cap(\{z\}\times\CC)$.
Then $t$ is lower semicontinuous.

Consequently, if $D\subset\RR^3$ is such that $S_D$ is a pseudoconvex semitube domain then the function
$$
s:D_1\owns z\longmapsto\text{number of components of $D\cap(\{z\}\times\RR)$},
$$
where $D_1:=\{z\in\CC:D\cap (\{z\}\times\RR)\neq\emptyset\}$, is lower semicontinuous.
\end{prop}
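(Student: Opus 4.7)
The plan is to reduce the semitube statement to the Hartogs-Laurent one via Proposition~\ref{prop:1}, and then to settle the Hartogs-Laurent case by the Kontinuit\"atssatz argument used in the proof of Theorem~\ref{thm:1}. The reduction is immediate: if $D\cap(\{z\}\times\RR)$ has open-interval components $(a_i,b_i)$, then the fiber $\pi(S_D)_z$ has annular components $\{e^{a_i}<|w|<e^{b_i}\}$, so $s$ and $t$ agree on $D_1$.

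For the Hartogs-Laurent statement, fix $z_0\in\Omega$ and a finite integer $k\leq t(z_0)$; the goal is to produce a neighborhood $U$ of $z_0$ on which $t\geq k$. Enumerate the annular components of $G_{z_0}$ in order of increasing radius and select $k$ consecutive ones $A_j=\{r_j<|w|<R_j\}$ with $R_j\leq r_{j+1}$, $j=1,\ldots,k$. Pick radii $s_j\in(r_j,R_j)$, so that $\{|w|=s_j\}\subset A_j$, and $\rho_j\in[R_j,r_{j+1}]$ for $j=1,\ldots,k-1$; by consecutivity the circle $\{|w|=\rho_j\}$ is disjoint from $G_{z_0}$. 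By openness of $G$ and compactness of the circles $\{z_0\}\times\{|w|=s_j\}$ in $G$, one finds a neighborhood $U$ of $z_0$ with $\{z\}\times\{|w|=s_j\}\su G$ for every $z\in U$ and every $j$.

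Next I argue by contradiction: assume a sequence $U\owns z_n\to z_0$ satisfies $t(z_n)<k$. Since the components of $G_{z_n}$ are concentric annuli, the map sending each $s_j$ to the annular component of $G_{z_n}$ containing $\{|w|=s_j\}$ is non-decreasing in $j$; having image of cardinality less than $k$, it must identify two consecutive indices. A subsequence argument yields an index $j\in\{1,\ldots,k-1\}$ such that for every $n$ the two circles $\{|w|=s_j\}$ and $\{|w|=s_{j+1}\}$ lie in a single annular component of $G_{z_n}$, which forces $\{z_n\}\times\ov\aaa(s_j,s_{j+1})\su G$.

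To finish, I apply the Kontinuit\"atssatz of \cite[Theorem 4.1.19]{JJ}, in exactly the form used in the proof of Theorem~\ref{thm:1}, to the family of analytic annuli $f_{z_n}(\lambda):=(z_n,\lambda)$, $\lambda\in\ov\aaa(s_j,s_{j+1})$, together with the limit map $f_{z_0}$. The boundary union $\bigcup_{n\geq 0}f_{z_n}(\pa\aaa(s_j,s_{j+1}))$ is relatively compact in $G$, each $f_{z_n}(\ov\aaa)$ with $n\geq 1$ lies in $G$, yet $f_{z_0}(\ov\aaa)$ meets the point $(z_0,w_0)$ with $|w_0|=\rho_j$, which by the choice of $\rho_j$ does not belong to $G$. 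This contradicts the Kontinuit\"atssatz. The main point requiring care is the combinatorial monotonicity step that pins down a fixed pair of consecutive circles collapsing into one component; once it is arranged, the remainder is a direct transcription of the argument of Theorem~\ref{thm:1}.
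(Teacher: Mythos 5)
Your proof is correct and takes essentially the same route as the paper, whose own argument is precisely the sketch you have expanded: pick witnesses in $k$ distinct components of $G_{z_0}$, apply the Kontinuit\"atssatz for annuli as in the proof of Theorem~\ref{thm:1}, and reduce the semitube case via $\pi(S_D)$. One cosmetic point: you need not (and in general cannot, e.g.\ when the components of $G_{z_0}$ are densely ordered) select $k$ \emph{consecutive} components --- any $k$ distinct ones suffice, since the circle $\{|w|=R_j\}$ is automatically disjoint from $G_{z_0}$ (a component meeting it would overlap $A_j$), so the choice $\rho_j:=R_j$ always works.
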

\begin{proof}
Fix $z_0\in\Omega$. Let $w_1,\ldots,w_k\in G_{z_0}$ be points from different components of
$G_{z_0}$. Now making use of the Kontinuit\"atssatz for the annuli (as in the proof of the previous theorem)
we easily get that for $z\in \Omega$ sufficiently close to $z_0$ the number of components of $G_z$ is at least $k$ which finishes the proof.

The case of semitube domains follows from the case of the Hartogs-Laurent domains by applying the result for the domain $\pi(S_D)$.
\end{proof}

Note that the above property easily implies that the semitube domain over the torus in a `vertical position' (and many other) as described in Section 6.4 of \cite{Bur-Dwi} is not pseudoconvex.

\bigskip
 
In view of Theorem~\ref{thm:1} it would also be interesting and natural to consider the following problem.
Let $D\subset\CC^n$ be a domain satisfying the following condition. For any real isometry $A$ of $\CC^n=\RR^{2n}$ the set $A(D)$ is pseudoconvex. Does it follow that $D$ is convex? Certainly the problem is non-trivial for $n\geq 2$. We shall show below that the answer is negative for $n\geq 2$, too.

\begin{prop}\label{prop:example} Let $n\ge 2$. Then there is a non-convex domain $D\subset\CC^n$ such that $A(D)$ is pseudoconvex for any real isometry of $\mathbb C^n=\mathbb R^{2n}$.
\end{prop}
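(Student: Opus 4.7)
The plan is to exhibit an explicit quadratic polynomial $\rho$ on $\CC^n$ whose real Hessian has one negative eigenvalue while the sum of its two smallest eigenvalues is still non-negative, and then take $D$ to be a sublevel set of $\rho$. Concretely, I would set
\[
\rho(z) := -(\re z_1)^2 + 2(\im z_1)^2 + 2\sum_{j=2}^{n}|z_j|^2, \quad z \in \CC^n,
\]
so that in real coordinates $(\re z_1,\im z_1,\re z_2,\im z_2,\ldots,\im z_n)$ the real Hessian of $\rho$ is the constant diagonal matrix $\diag(-2,4,4,\ldots,4)$, and the complex Hessian computes to $\diag(1/2,2,\ldots,2)$, hence strictly positive. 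Then I would define $D:=\{z\in\CC^n:\rho(z)<1\}$.

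The crux of the argument is the following linear-algebraic characterization, which I would prove first: a $\cC^2$ function $u\colon\CC^n\to\RR$ has $u\circ A$ plurisubharmonic for every real isometry $A$ of $\RR^{2n}$ if and only if at every point the sum of the two smallest eigenvalues of the real Hessian $Hu$ is non-negative. Indeed, PSH-ness of $u\circ A$ along the standard complex line spanned by an orthonormal pair $(v,Jv)\subset\RR^{2n}$ is the condition $(Av)^T Hu\,(Av)+(AJv)^T Hu\,(AJv)\geq 0$ at the corresponding point; as $A$ varies over $O(2n)$ the pair $(Av,AJv)$ exhausts all orthonormal pairs in $\RR^{2n}$, and the infimum of $w_1^T H w_1 + w_2^T H w_2$ over such pairs equals the sum of the two smallest eigenvalues of $H$. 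For our quadratic $\rho$ this quantity is $-2+4=2>0$ at every point, so $\rho\circ A^{-1}$ is PSH on $\CC^n$ for every real isometry $A$.

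Granting this, $A(D)=\{\rho\circ A^{-1}<1\}$ admits the plurisubharmonic exhaustion $z\mapsto-\log(1-\rho\circ A^{-1}(z))+\|z\|^2$ (the first summand blows up on $\partial A(D)$; the quadratic term dominates at infinity inside $A(D)$ since $\rho$ grows at most quadratically), hence is pseudoconvex. Checking that $D$ is connected is routine --- the real axis $\{(x,0,\ldots,0):x\in\RR\}$ lies in $D$ and every fibre of $z\mapsto\re z_1$ is an open ball in $\RR^{2n-1}$ --- and non-convexity is witnessed by the two points $(\pm 2+i,0,\ldots,0)\in D$ (both give $\rho=-2<1$) whose midpoint $(i,0,\ldots,0)$ yields $\rho=2>1$ and so lies outside $D$. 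The main obstacle is the Hessian-eigenvalue characterization in the middle paragraph; once that linear-algebra fact is in hand (together with the elementary observation that every orthonormal pair in $\RR^{2n}$ is realised as the first two columns of some element of $O(2n)$), the rest of the construction is immediate.
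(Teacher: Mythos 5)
Your proposal is correct and follows essentially the same route as the paper: both take a non-convex sublevel set of an explicit quadratic $u$ whose real Hessian satisfies $\HH u(a;X)+\HH u(a;Y)\ge 0$ for every orthonormal pair $X,Y$ (the paper's notion of multisubharmonicity), which is precisely the condition for plurisubharmonicity to survive every real isometry. The only cosmetic difference is that you package this condition as non-negativity of the sum of the two smallest eigenvalues of the real Hessian (Ky Fan's minimum principle), whereas the paper verifies it for its quadratic $x_1^2+\ldots+x_{2n-1}^2-\alpha x_{2n}^2$ directly via the Cauchy--Schwarz inequality $X_{2n}^2+Y_{2n}^2\le 1$.
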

\begin{proof}
At first consider a class of functions defined on domains $\Omega\su\RR^m$, $m\geq 2$. We call an upper semicontinuous function $u:\Omega\lon [-\infty,\infty)$ {\it multisubharmonic} if $u$ restricted to $\Omega\cap(L+a)$ is subharmonic for any two-dimensional subspace $L\subset\RR^m$ and a point $a\in\RR^m$ such that $\Omega\cap(L+a)\neq\emptyset$.
Let us make the last statement precise --- the function $u$ on $\Omega\cap(L+a)$ is considered to be subharmonic if
for some (any) pair of vectors $X$ and $Y$ forming an orthonormal basis of $L$ the function
$(t,s)\longmapsto u(a+tX+sY)$ is subharmonic on its domain (lying in $\RR^2$). Certainly, in the case of $u$ being $\cC^2$ we have the following simple description:
$$\Delta_{X,Y}u(a):=\frac{\partial^2 u}{\partial X^2}(a)+\frac{\partial^2 u}{\partial Y^2}(a)\geq 0$$
for any $X,Y\in\RR^m$, $||X||=||Y||=1$, $\langle X,Y \rangle=0$ and $a\in \Omega$.

It is clear that any multisubharmonic function (in $\CC^n=\RR^{2n}$) is plurisubharmonic and these two concepts are the same in $\CC$.

For $m\geq 2$ and $\alpha\in(0,1]$ consider the following function $$u(x):=\frac 12(x_1^2+\ldots+x_{m-1}^2-\alpha x_m^2).$$ We have 
$$\Delta_{X,Y}u(a)=X_1^2+\ldots+X_{m-1}^2-\alpha X_m^2+Y_1^2+\ldots+Y_{m-1}^2-\alpha Y_m^2.$$
Then for any orthonormal $X,Y$ we get $\Delta_{X,Y}u(a)=2-(1+\alpha)(X_m^2+Y_m^2)$. And now note that
$$(1-X_m^2)(1-Y_m^2)=(X_1^2+\ldots+X_{m-1}^2)(Y_1^2+\ldots+Y_{m-1}^2)$$$$\geq(X_1Y_1+\ldots+X_{m-1}Y_{m-1})^2=X_m^2Y_m^2,$$ whence $X_m^2+Y_m^2\leq 1$ and $\Delta_{X,Y}u(a)\geq 1-\alpha$, so $u$ is multisubharmonic.

Now define the set $$D:=\{z\in\CC^n:u(z)<1\}\quad(m:=2n).$$ Note that $D$ is connected and non-convex. Then it follows from the multisubharmonicity of $u$ that $A(D)$ is pseudoconvex for any real isometry $A$.
\end{proof}

\end{document}